\documentclass[12pt]{article}
\usepackage{amsmath,amssymb,amsthm}
\usepackage{comment}
\usepackage{hyperref}
\usepackage{booktabs}

\newtheorem{theorem}{Theorem}[section]
\newtheorem{corollary}[theorem]{Corollary}
\newtheorem{lemma}[theorem]{Lemma}
\newtheorem{conjecture}[theorem]{Conjecture}
\theoremstyle{remark}

\newenvironment{repeatconjecture}[1]
{%
  \begingroup
  \begin{conjecture}
}
{%
  \end{conjecture}
  \endgroup
}

\begin{document}

\title{A simple proof of the Atkin-O’Brien partition congruence conjecture for powers of 13}
\author{Frank Garvan, Zhumagali Shomanov}
\date{}
\maketitle

\section*{Abstract}
In 1967, Atkin and O’Brien conjectured congruences for the partition function involving Hecke operators modulo powers of 13. In this paper, we provide a simple proof of this conjecture.

\section{Introduction}

The partition function $p(n)$ counts the number of distinct ways of expressing a positive integer $n$ as a sum of positive integers, without considering the order of the summands. For example, the partitions of $4$ are:

$$
4, \quad 3+1, \quad 2+2, \quad 2+1+1, \quad 1+1+1+1,
$$
so $p(4) = 5$.

The generating function for the partition function is an infinite product given by:

\begin{equation}
\label{eq:gen_func}
\sum_{n=0}^{\infty} p(n) q^n = \prod_{k=1}^{\infty} \frac{1}{1 - q^{k}},
\end{equation}
where $|q| < 1$.

The $q$-Pochhammer symbol is defined as:

\begin{equation}
(a;q)_n = \prod_{k=0}^{n-1} (1 - a q^{k}),
\end{equation}
and for $n = \infty$:
\begin{equation}
(a;q)_\infty = \prod_{k=0}^{\infty} (1 - a q^{k}).
\end{equation}
Using this notation, the generating function for the partition function can be concisely written as:

\begin{equation}
\label{eq:gen_func_pochhammer}
\sum_{n=0}^{\infty} p(n) q^{n} = \frac{1}{(q; q)_\infty} = \frac{1}{f(q)}.
\end{equation}

The Dedekind eta function is a modular form defined in the upper half-plane $\mathbb{H}$ by:

\begin{equation}
\label{eq:dedekind_eta}
\eta(\tau) = e^{\pi i \tau /12} \prod_{n=1}^{\infty} \left(1 - e^{2\pi i n \tau}\right),
\end{equation}
where $\tau \in \mathbb{H}$. Setting $q = e^{2\pi i \tau}$, we have $|q|<1$, so the product converges.
Using the $q$-Pochhammer symbol, the eta function can be expressed as:

\begin{equation}
\eta(\tau) = q^{1/24} (q; q)_\infty,
\end{equation}
where $q^{1/24} = e^{\pi i \tau /12}$. Therefore, the reciprocal of the eta function is:

\begin{equation}
\eta(\tau)^{-1} = q^{-1/24} \frac{1}{(q; q)_\infty},
\end{equation}
which is a weakly holomorphic modular form on $M_{-1/2}^!(1,\nu_\eta^{-1})$, where $\nu_\eta$ is the eta multiplier.
From equation \eqref{eq:gen_func_pochhammer}, we see that:

\begin{equation}
\frac{1}{(q; q)_\infty} = \sum_{n=0}^{\infty} p(n) q^{n}.
\end{equation}
Thus, the generating function of $p(n)$ is related to the Dedekind eta function by:

\begin{equation}
\eta(\tau)^{-1} = q^{-1/24} \sum_{n=0}^{\infty} p(n) q^{n}=\sum_{n=-1}^\infty p\left(\cfrac{n+1}{24}\right)q^{n/24},
\end{equation}
where $p(n)=0$ if $n$ is not a positive integer.

In this paper we will prove a conjecture stated by Atkin and O'Brien in \cite{AtkinOBrien1967}. We will follow their work and let
\[
P(N) =
\begin{cases}
p(n) & \text{if } N = 24n - 1, \\
0 & \text{if } N < -1 \text{ or } N \not\equiv -1 \pmod{24} \text{ or } N \text{ is nonintegral}.
\end{cases}
\]
In their work, they proved
\begin{theorem}
\label{AOBthm}
For all $\alpha\ge1$ there exists an integral constant $K_\alpha$ not divisible by 13 such that for all $N$
\begin{equation}
P(13^{\alpha+2}N)\equiv K_\alpha P(13^\alpha N)\pmod{13^\alpha}.
\end{equation}
\end{theorem}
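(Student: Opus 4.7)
The plan is to recast the theorem as an eigenvalue congruence for Atkin's $U_{13}$ operator acting on $\widetilde F(\tau) := \eta(24\tau)^{-1}$. By the discussion at the end of the introduction, the Fourier expansion of $\widetilde F$ is $\sum_N P(N)\,q^N$, and iterating $U_{13}$ a total of $\alpha$ times extracts the subseries $\sum_N P(13^\alpha N)\,q^N$. Thus the theorem is equivalent to the operator identity
\[
U_{13}^{\alpha+2}\,\widetilde F \;\equiv\; K_\alpha\, U_{13}^{\alpha}\,\widetilde F \pmod{13^\alpha} \qquad (\alpha \ge 1).
\]

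To prove this I would exhibit a finitely-generated $\mathbb{Z}_{(13)}$-lattice $V$ of weakly holomorphic modular forms on $\Gamma_0(24\cdot 13)$, of weight $-1/2$ and with the appropriate eta multiplier, that is stable under $U_{13}^2$ and contains each $U_{13}^\alpha\widetilde F$. A natural candidate for $V$ is the span of a small collection of explicit eta-quotients obtained from $\widetilde F$ via Atkin--Lehner involutions and the $V_{13}$ operator; the $13$-integrality of $U_{13}^2$ on this lattice is a standard consequence of the theory of Hecke operators on $\eta$-quotients (cf.\ Newman, Ligozat). Within $V$ one then identifies a distinguished ``principal line'' spanned modulo $13^\alpha$ by $U_{13}^\alpha\widetilde F$, and verifies that $U_{13}^2$ acts on this line modulo $13^\alpha$ as multiplication by a scalar, which one declares to be $K_\alpha$.

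The argument proper is by induction on $\alpha$. The base case $\alpha = 1$ reduces to the mod-$13$ identity $U_{13}^{3}\widetilde F \equiv K_1\, U_{13}\widetilde F \pmod{13}$, which one verifies by expanding both sides in the eta-quotient basis of $V$ and comparing enough Fourier coefficients to rule out discrepancies in the ambient (finite-dimensional) space modulo $13$. The inductive step is essentially formal: given $U_{13}^{\alpha+2}\widetilde F = K_\alpha U_{13}^\alpha\widetilde F + 13^\alpha R_\alpha$ with $R_\alpha \in V$, one applies $U_{13}^2$ once more and invokes the mod-$13$ eigenvalue identity on $R_\alpha$, obtaining the $\alpha+1$ statement with an updated constant $K_{\alpha+1} \equiv K_\alpha \pmod{13^\alpha}$. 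The main obstacle is the base case: at $p = 5, 7, 11$ a single $U_p$ already acts as a scalar modulo $p$ on the relevant line, but for $p = 13$ one must pass to $U_{13}^2$, and the requisite mod-$13$ computation demands an explicit decomposition of $U_{13}^2\widetilde F$ as a manageable linear combination of eta quotients on $\Gamma_0(24\cdot 13)$. The ``simple'' content of the paper presumably lies precisely in isolating such a decomposition in a clean form, after which the remaining congruence verification is purely algorithmic.
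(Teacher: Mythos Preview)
First, a framing remark: the paper does not itself prove Theorem~\ref{AOBthm}; it quotes it as a result of Atkin and O'Brien and then reuses their machinery (the $g$-expansion and the valuation bounds of Lemma~\ref{AOB_val}) to prove Conjecture~\ref{AOBconj}. So the relevant comparison is between your sketch and the Atkin--O'Brien method as it appears in Section~4.

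Your inductive step is not ``essentially formal,'' and this is where the sketch breaks. From $U_{13}^{\alpha+2}\widetilde F = K_\alpha U_{13}^{\alpha}\widetilde F + 13^\alpha R_\alpha$ you apply $U_{13}$ (or $U_{13}^2$) and then want to ``invoke the mod-$13$ eigenvalue identity on $R_\alpha$.'' But the base case only tells you that $U_{13}^3\widetilde F \equiv K_1 U_{13}\widetilde F \pmod{13}$; it says nothing about how $U_{13}$ acts on a general element $R_\alpha \in V$. For the induction to close you would need that the image of $U_{13}$ on all of $V$ is one-dimensional modulo $13$, or more precisely that $U_{13}R_\alpha$ is a scalar multiple of $U_{13}^{\alpha+1}\widetilde F$ modulo $13$. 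That is a genuine structural fact about $U_{13}$ (a contraction property), not a consequence of a single mod-$13$ eigenvalue computation on the initial vector. Your lattice $V$ is also never actually constructed: ``the span of a small collection of explicit eta-quotients obtained from $\widetilde F$ via Atkin--Lehner involutions and the $V_{13}$ operator'' is a hope, not a definition, and there is no finite-rank $U_{13}^2$-stable lattice of the kind you describe without further work.

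What Atkin--O'Brien actually do, and what the paper mirrors, is different in two respects. First, they work on $\Gamma_0(13)$ (not $\Gamma_0(24\cdot 13)$) with the single Hauptmodul $g=(\eta(13\tau)/\eta(\tau))^2$, writing $L_{2\alpha-1}=\sum_r k_{\alpha,r}g^r$ and $L_{2\alpha}=\sum_r \ell_{\alpha,r}g^r$; the ambient space is infinite-dimensional over $\mathbb{Z}$, not a finite lattice. Second, the heart of the argument is the quantitative contraction estimate of Lemma~\ref{AOB_val}: the bounds $\pi(c_{k,r})\ge\lfloor(13r-k-1)/14\rfloor$ and $\pi(d_{k,r})\ge\lfloor(13r-k-8)/14\rfloor$ force the $13$-adic valuation of each $g^r$-coefficient to grow with $r$ under iteration of $U_{13}$. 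The eigenvalue congruence is then extracted not by naive induction but by bounding the valuations of the $2\times 2$ minors $k_{\alpha+1,s}\ell_{\alpha,t}-k_{\alpha+1,t}\ell_{\alpha,s}$ (exactly parallel to the $\mu^\alpha_{s,t}$, $\gamma^\alpha_{s,t}$ of Theorem~\ref{thm:MuGammaVal}), together with $\pi(k_{\alpha,1})=\pi(\ell_{\alpha,1})=0$. The contraction estimate is precisely the missing ingredient that your ``formal'' induction step glosses over.
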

\noindent They also made the following conjecture
\begin{conjecture}\label{AOBconj}
Let \(\alpha \geq 1\), and \(p \neq 13\) be a prime \( \geq 5\). Then there exists a constant \(k = k(p, \alpha)\) such that for all \(N\),
\[
P(p^2 \cdot 13^\alpha N) - \left\{ k - \left(\frac{-3 \cdot 13^\alpha N}{p}\right)p^{-2} \right\} P(13^\alpha N) + p^{-3} P\left(\frac{13^\alpha N}{p^2}\right) \equiv 0 \pmod{13^\alpha},
\]
where \(\left(\frac{a}{b}\right)\) is the Jacobi symbol.
\end{conjecture}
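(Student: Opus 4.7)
The plan is to recast Conjecture \ref{AOBconj} as the statement that a certain projection of $\Phi(\tau) := \eta(24\tau)^{-1} = \sum_N P(N)\,q^N$ is a Hecke eigenform modulo $13^\alpha$. The form $\Phi$ has weight $-1/2$ on $\Gamma_0(576)$ and character $\chi_{12}$ (inherited from the eta multiplier). Applying the Shimura formula for the half-integral weight Hecke operator $T_{p^2}$ in weight $\lambda+1/2$ with $\lambda=-1$ and nebentype $\chi_{12}$, one computes
\[
T_{p^2}\Phi = \sum_N \Bigl(P(p^2N) + \bigl(\tfrac{-3N}{p}\bigr)\,p^{-2}P(N) + p^{-3}P(N/p^2)\Bigr)\,q^N,
\]
the factor $\bigl(\tfrac{3}{p}\bigr)$ hidden in $\bigl(\tfrac{-3N}{p}\bigr)$ being exactly $\chi_{12}(p)$. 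Setting $g_\alpha := U_{13^\alpha}\Phi$, i.e.\ the projection onto $q$-exponents divisible by $13^\alpha$, and using that $T_{p^2}$ and $U_{13^\alpha}$ commute when $p\neq 13$, the conjecture is equivalent to the compact assertion
\[
T_{p^2}\,g_\alpha \equiv k(p,\alpha)\,g_\alpha \pmod{13^\alpha}.
\]

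The engine of the proof is Theorem \ref{AOBthm}, which in this language reads $U_{13^2}g_\alpha \equiv K_\alpha g_\alpha \pmod{13^\alpha}$ with $K_\alpha$ a unit mod $13$; thus $g_\alpha$ is a $U_{13^2}$-eigenform modulo $13^\alpha$ at an ordinary eigenvalue. Since $p\neq 13$, the operators $T_{p^2}$ and $U_{13^2}$ commute on the appropriate weight $-1/2$ space, so applying $T_{p^2}$ to the Atkin--O'Brien congruence immediately yields
\[
U_{13^2}\bigl(T_{p^2}\,g_\alpha\bigr) \equiv K_\alpha\bigl(T_{p^2}\,g_\alpha\bigr) \pmod{13^\alpha},
\]
so $T_{p^2}g_\alpha$ also lies in the $K_\alpha$-eigenspace of $U_{13^2}$ modulo $13^\alpha$. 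The conjecture follows as soon as one shows that this eigenspace, inside a suitable Hecke-stable ambient space, is one-dimensional and spanned by $g_\alpha$; the constant $k(p,\alpha)$ then emerges as the $T_{p^2}$-eigenvalue, and can be read off from the ratio of any two corresponding low-order $q$-coefficients of $T_{p^2}g_\alpha$ and $g_\alpha$.

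The main obstacle is this one-dimensionality of the $K_\alpha$-eigenspace. The approach I would take begins modulo $13$: fix a finite-dimensional ambient space $V$ of weight $-1/2$ forms on a level divisible by $576\cdot 13^{2\alpha}$, containing $g_\alpha$ and stable under both $U_{13^2}$ and $T_{p^2}$, and write an explicit basis in terms of eta-quotients (essentially the ingredients used by Atkin and O'Brien). Computing the matrix of $U_{13^2}$ on this basis should exhibit that $K_\alpha \bmod 13$ is the unique eigenvalue that is a unit and occurs with multiplicity one. A standard Hensel-type lifting, exploiting that $U_{13^2}-K_\alpha$ acts invertibly on a complementary subspace, then propagates the one-dimensionality from modulus $13$ up to modulus $13^\alpha$, completing the proof. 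The bulk of the ``simple proof'' promised in the title is presumably the clean packaging of this finite-dimensional eigenspace analysis, together with the verification that the integrality and well-definedness of $k(p,\alpha)$ modulo $13^\alpha$ really does follow from Theorem \ref{AOBthm}.
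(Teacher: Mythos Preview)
Your reformulation is correct: the conjecture is equivalent to $\eta^{-1}\,|\,U_{13}^{\alpha}\,|\,T_{p^2} \equiv k\,\eta^{-1}\,|\,U_{13}^{\alpha} \pmod{13^{\alpha}}$, and the commutation of $T_{p^2}$ with $U_{13}$ for $p\neq 13$ is exactly the right observation. But the proposal stops precisely where the real work begins. You say the crux is the one-dimensionality of the $K_\alpha$-eigenspace of $U_{13^2}$ and then only sketch how you would attack it: ``fix a finite-dimensional ambient space $V$ \dots\ stable under both $U_{13^2}$ and $T_{p^2}$,'' compute a matrix, Hensel-lift. The trouble is that $T_{p^2}$ acting on a weakly holomorphic form enlarges pole orders in a way that depends on $p$; there is no single finite-dimensional space of $q$-series that is $T_{p^2}$-stable uniformly in $p$. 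One can salvage this by working in a $\mathbb{Z}/13^{\alpha}\mathbb{Z}$-module where the high-order terms have high $13$-adic valuation and hence vanish, but establishing those valuation bounds is itself the content of the theorem. So as written the proposal is an outline of the Boylan--Webb/Folsom--Kent--Ono philosophy, not a proof.

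The paper takes a different and more hands-on route. Rather than invoke an abstract eigenspace argument, it controls $T_{p^2}$ explicitly using Newman's theorem: for the eta-quotients $\omega_r=\eta(13\tau)^{2r-1}/\eta(\tau)^{2r}$ one has $\omega_r\,|\,T_{p^2}=\sum_j h_{r,j}\,\omega_j$ with $\pi(h_{r,j})\ge \max(0,j-r)$. Writing $L_\beta=\sum_r k_{\beta,r}g^r$ and $H_\beta=\sum_r u_{\beta,r}g^r$ for the $U_{13}$-iterates of $\eta^{-1}$ before and after applying $T_{p^2}$, the paper shows by induction on $\alpha$, using the Atkin--O'Brien valuation bounds for $c_{k,r},d_{k,r}$, that the $2\times 2$ minors
\[
\mu_{st}^{\alpha}=u_{\alpha,s}k_{\alpha,t}-u_{\alpha,t}k_{\alpha,s},
\qquad
\gamma_{st}^{\alpha}=v_{\alpha,s}\ell_{\alpha,t}-v_{\alpha,t}\ell_{\alpha,s}
\]
satisfy $\pi(\mu_{st}^{\alpha})\ge 2\alpha-1$ and $\pi(\gamma_{st}^{\alpha})\ge 2\alpha$. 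Since $\pi(k_{\alpha,1})=\pi(\ell_{\alpha,1})=0$, this yields $u_{\alpha,r}\equiv (u_{\alpha,1}/k_{\alpha,1})k_{\alpha,r}$ modulo the required power of $13$, which is the proportionality $H_\alpha\equiv k\,L_\alpha$. In effect the paper proves your ``one-dimensionality'' by a direct determinant--valuation computation rather than by locating everything inside a Hecke-stable module; Newman's bounds replace the $T_{p^2}$-stability hypothesis you need, and the Atkin--O'Brien recursion in the $g$-basis replaces the Hensel step.
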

\noindent {\bf Note}: Atkin and O'Brien proved the conjecture for $\alpha=1, 2$.

An immediate corollary is
\begin{corollary}
\label{AOBcong}
For $\alpha\ge1$ and for primes $p\ge5$, $p\neq13$, there is a constant $k=k(p,\alpha)$ such that for all n coprime to p
\[
p\left(\cfrac{13^\alpha p^3n+1}{24}\right)\equiv k\, p\left(\cfrac{13^\alpha p n+1}{24}\right)\pmod{13^\alpha}.
\]
\end{corollary}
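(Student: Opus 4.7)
\emph{Proof proposal for Corollary~\ref{AOBcong}.} The plan is to derive the corollary immediately from Conjecture~\ref{AOBconj} by specializing the variable $N$. In the conjectured congruence I would substitute $N = pn$, where $n$ is coprime to $p$ as in the corollary's hypothesis. With this choice, $p^2 \cdot 13^\alpha N = 13^\alpha p^3 n$ and $13^\alpha N = 13^\alpha p n$, so the first two $P$-values in the conjecture line up exactly with the two partition values appearing in the corollary.

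Next, I would argue that the remaining two terms drop out. Since $\gcd(n,p) = 1$ and $p \geq 5$, the quotient $13^\alpha N / p^2 = 13^\alpha n / p$ is not an integer, so $P(13^\alpha N / p^2) = 0$ by the definition of $P$. The Jacobi symbol $\left(\frac{-3 \cdot 13^\alpha N}{p}\right) = \left(\frac{-3 \cdot 13^\alpha p n}{p}\right)$ also vanishes, because $p$ divides its numerator. Hence the conjecture collapses to $P(13^\alpha p^3 n) \equiv k\, P(13^\alpha p n) \pmod{13^\alpha}$ with the same constant $k = k(p,\alpha)$.

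Finally, translating back via $P(24m-1) = p(m)$ (with $P$ zero otherwise, matching the convention that $p(\cdot)$ vanishes on non-integer or negative arguments) yields precisely the congruence
\[
p\!\left(\frac{13^\alpha p^3 n+1}{24}\right) \equiv k\, p\!\left(\frac{13^\alpha p n+1}{24}\right) \pmod{13^\alpha},
\]
covering the case when $(13^\alpha p n + 1)/24$ fails to be a non-negative integer by triviality on both sides. There is no substantive obstacle in this deduction: it is a direct specialization, and all of the difficulty lies in establishing Conjecture~\ref{AOBconj} itself, for which the corollary is simply the repackaging forced by the substitution $N=pn$ with $\gcd(n,p)=1$.
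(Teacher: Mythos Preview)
Your proposal is correct and matches the paper's intent: the paper states this as ``an immediate corollary'' of Conjecture~\ref{AOBconj} without supplying any further argument, and your specialization $N = pn$ with $\gcd(n,p)=1$ is precisely the immediate deduction being invoked. The vanishing of the Jacobi-symbol term and of $P(13^\alpha N/p^2)$ under this substitution is exactly what makes the corollary follow at once.
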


In \cite{Folsom}, the authors proved results similar to Theorem \ref{AOBthm}, Conjecture \ref{AOBconj}, and Corollary \ref{AOBcong} for powers of all primes $\ell$ such that $5\le \ell\le31$. Namely,
\begin{theorem}\label{FKOcong}
Suppose that $5 \le \ell \le 31$ is a prime, and that $\alpha \ge 1$. If $b_1 \equiv b_2 \pmod{2}$ are integers with $b_2 > b_1 > b_\ell(\alpha)$,
then there is an integer $A_\ell\bigl(b_1,b_2,\alpha\bigr)$ such that for every nonnegative integer $n$ we have
\[
p\!\left(\cfrac{\ell^{b_2}n+1}{24}\right)
\;\equiv\;
A_\ell\bigl(b_1,b_2,\alpha\bigr)\,
p\!\left(\frac{\ell^{b_1}n+1}{24}\right)
\pmod{\ell^\alpha}
\]
where 
\[
b_\ell(\alpha) := 2 \left( \left\lfloor \frac{\ell - 1}{12} \right\rfloor + 2 \right) \alpha - 3.
\]
\end{theorem}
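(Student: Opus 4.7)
The plan is to recast the congruence as a finite-dimensional linear algebra problem in the space of $\ell$-adic half-integral weight modular forms. First I would package the partition values into a generating function
\[
F_b(\tau) \;:=\; \sum_{n\ge 0} p\!\left(\cfrac{\ell^{b}n+1}{24}\right) q^{n},
\]
which is the $q$-series of a weakly holomorphic form of weight $-1/2$ obtained from $\eta(24\tau)^{-1}$ by sieving along the arithmetic progression $24m \equiv 1 \pmod{\ell^{b}}$ and rescaling $q \mapsto q^{1/\ell^{b}}$. Standard multiplier-system computations show that $F_b$ lives on an appropriate $\Gamma_0$-subgroup with a fixed character, and that the half-integral weight Hecke operator $T_{\ell^{2}}$ carries $F_b$ to $F_{b+2}$ up to an explicit unit scalar. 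This gives the iterative mechanism that moves $b \mapsto b+2$.

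Next I would clear the pole at the cusps by multiplying by a suitable power of $\eta^{24}=\Delta$ to land in a space of holomorphic forms of some large weight, then invoke Serre's theory of $\ell$-adic modular forms to reduce the whole question modulo $\ell^{\alpha}$ in a finite-dimensional space $V_{\alpha}$. The dimension of $V_{\alpha}$ is controlled by the classical dimensions $\dim M_k(\mathrm{SL}_2(\mathbb{Z})) \le \lfloor k/12\rfloor + 1$ summed over the relevant residues of $k$ modulo $\ell-1$, which for $\ell \le 31$ are uniformly tiny — this is precisely where the $\lfloor(\ell-1)/12\rfloor + 2$ factor in $b_\ell(\alpha)$ originates.

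The heart of the argument is a quantitative nilpotence/stability statement: on $V_\alpha$ the operator $T_{\ell^{2}}$, iterated $(\lfloor(\ell-1)/12\rfloor + 2)\alpha$ times, compresses the ambient space into a one-dimensional subspace (an eigenline for $T_{\ell^{2}}$ modulo $\ell^\alpha$). Once this is known, the conclusion is immediate: for any $b_1, b_2 > b_\ell(\alpha)$ with $b_1 \equiv b_2 \pmod 2$, both $F_{b_1}$ and $F_{b_2}$ lie in this one-dimensional subspace modulo $\ell^{\alpha}$, hence differ by a scalar, which is the claimed constant $A_\ell(b_1,b_2,\alpha)$. The factor of $2$ in $b_\ell(\alpha) = 2(\lfloor(\ell-1)/12\rfloor + 2)\alpha - 3$ reflects that each application of $T_{\ell^{2}}$ advances $b$ by $2$, and the $-3$ is a small boundary correction to handle the initial pole-clearing step.

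The main obstacle is proving the quantitative nilpotence bound rather than merely its qualitative version. A naive dimension count only produces \emph{some} linear recurrence among the iterates $F_b$, not the clean scalar relation "later $=$ scalar $\times$ earlier." To sharpen it, one must analyze the descending Serre $\ell$-adic filtration on $V_\alpha$ by $\ell$-divisibility and verify that each application of $T_{\ell^{2}}$ strictly lowers the filtration on the non-stable part, stopping only when it reaches the single eigenline. The hypothesis $\ell \le 31$ is essential here: it forces each graded piece of the filtration to have dimension at most one, which is exactly what upgrades a generic linear recurrence into the pure scalar relation asserted by the theorem.
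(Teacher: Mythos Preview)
The paper does not contain a proof of this theorem. Theorem~\ref{FKOcong} is quoted as a result of Folsom, Kent, and Ono \cite{Folsom}; the paper merely cites it (together with the subsequent improvement of Boylan and Webb \cite{Boylan}) as related work motivating the study of Conjecture~\ref{AOBconj}. There is therefore no ``paper's own proof'' to compare your proposal against.

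That said, your outline is broadly in the spirit of the approach taken in \cite{Folsom} and \cite{Boylan}: one packages the generating functions $P_\ell(b;z)$ into a finitely generated $\mathbb{Z}/\ell^\alpha\mathbb{Z}$-module of $\ell$-adic modular forms, and the smallness of $\lfloor(\ell-1)/12\rfloor$ for $\ell\le 31$ forces a rank-one phenomenon after sufficiently many iterations. One technical correction: the operator that advances $b\mapsto b+1$ is not the half-integral Hecke operator $T_{\ell^2}$ but rather (a normalized version of) the Atkin $U_\ell$ operator, exactly as in the present paper's constructions of $L_{2\alpha-1}$ and $L_{2\alpha}$ via repeated application of $U_{13}$. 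The parity condition $b_1\equiv b_2\pmod 2$ and the factor of $2$ in $b_\ell(\alpha)$ reflect the two-step periodicity of the module structure (odd versus even $b$), not a squaring of the Hecke operator. Your proposal is a plausible sketch of the cited argument, but as written it remains a plan rather than a proof, and the nilpotence-on-filtration step you flag as the main obstacle is indeed the substantive content that would need to be supplied.
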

\begin{theorem}
If $5 \leqslant \ell \leqslant 31$ and $\alpha \geqslant 1$, then for $b \geqslant b_\ell(\alpha)$ we have that $P_\ell(b; 24z) \pmod{\ell^\alpha}$ is an eigenform of all of the weight $k_\ell(\alpha) - \frac{1}{2}$ Hecke operators on $\Gamma_0(576)$. Here
\[
P_\ell(b;z):=\sum_{n=0}^\infty p\left(\cfrac{\ell^b n+1}{24}\right)q^{n/24}
\]
\end{theorem}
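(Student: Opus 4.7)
The plan is to place $P_\ell(b; 24z) \pmod{\ell^\alpha}$ inside a finite-dimensional space of holomorphic half-integral-weight cusp forms on $\Gamma_0(576)$ and then exploit Theorem~\ref{FKOcong} together with commutativity of the Hecke algebra. The starting point is the identity $\eta(24z)^{-1} = \sum_{N \equiv -1 \!\!\pmod{24}} p(\frac{N+1}{24}) q^N$, from which one checks directly that $U_\ell^{\,b}(\eta(24z)^{-1}) = P_\ell(b; 24z)$; so $P_\ell(b; 24z)$ is \emph{a priori} a weakly holomorphic form of weight $-\tfrac{1}{2}$ on $\Gamma_0(576)$ with multiplier $\nu_\eta^{-1}$.

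Following the Ahlgren--Ono--Treneer philosophy, I would then multiply by an appropriate power of $\eta(24z)$ to eliminate the pole at $\infty$; the threshold $b \geq b_\ell(\alpha)$ is calibrated exactly so that the modified form $G_b$ lies in the holomorphic cusp form space $S_{k_\ell(\alpha) - 1/2}(\Gamma_0(576))$ while still satisfying $G_b \equiv P_\ell(b; 24z) \pmod{\ell^\alpha}$. The passage from weight $-\tfrac{1}{2}$ up to $k_\ell(\alpha) - \tfrac{1}{2}$ uses standard Eisenstein congruences (the mod-$\ell^\alpha$ analogues of $E_{\ell-1} \equiv 1 \pmod{\ell}$) to adjust the weight without disturbing the mod $\ell^\alpha$ reduction; the explicit value of $k_\ell(\alpha)$ should fall out of bookkeeping on the $\eta$-factor exponents.

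With that setup, the $U_{\ell^2}$-eigenform property is immediate: because $U_{\ell^2} P_\ell(b; 24z) = P_\ell(b+2; 24z)$, Theorem~\ref{FKOcong} applied with $b_1 = b$ and $b_2 = b+2$ yields
\[
U_{\ell^2} P_\ell(b; 24z) \;\equiv\; A_\ell(b, b+2, \alpha)\, P_\ell(b; 24z) \pmod{\ell^\alpha}.
\]
For the remaining Hecke operators $T_{p^2}$ with $p \neq \ell$, commutativity $[T_{p^2}, U_{\ell^2}] = 0$ forces $T_{p^2}$ to preserve the $A_\ell(b, b+2, \alpha)$-eigenspace of $U_{\ell^2}$ inside $S_{k_\ell(\alpha)-1/2}(\Gamma_0(576)) \otimes \mathbb{Z}/\ell^\alpha\mathbb{Z}$. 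Provided this eigenspace is one-dimensional modulo $\ell^\alpha$, every $T_{p^2}$ must then act as a scalar on $P_\ell(b; 24z)$ mod $\ell^\alpha$, which is exactly the claimed eigenform property.

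The main obstacle is therefore establishing this one-dimensionality. Theorem~\ref{FKOcong} only tells us that the $P_\ell(b; 24z)$ for fixed parity and $b \geq b_\ell(\alpha)$ are pairwise proportional mod $\ell^\alpha$; it does not by itself rule out additional independent $U_{\ell^2}$-eigenforms in the ambient space sharing the eigenvalue $A_\ell(b, b+2, \alpha)$. Resolving this requires a genuine dimension estimate inside the mod-$\ell^\alpha$ Hecke algebra, for instance via a Jochnowitz-style filtration or a direct calculation in the (thankfully low-dimensional) spaces relevant for $5 \leq \ell \leq 31$. I expect that step to be the technical heart of the argument; once it is in hand, the rest of the proof is linear algebra and the congruence supplied by Theorem~\ref{FKOcong}.
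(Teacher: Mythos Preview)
This theorem is not proved in the present paper at all: it is quoted from \cite{Folsom} (Folsom--Kent--Ono) in the introduction, alongside Theorem~\ref{FKOcong} and its corollary, purely to situate the Atkin--O'Brien conjecture in the literature. The paper's own argument is devoted entirely to Conjecture~\ref{AOBconj} for $\ell=13$, via the Atkin--O'Brien/Newman machinery in Sections~3--4; it never revisits the general $5\le\ell\le31$ Hecke-eigenform statement. So there is no ``paper's own proof'' against which to compare your proposal.

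On the proposal itself: your outline is broadly the Folsom--Kent--Ono strategy, and you have correctly isolated the crux. The passage to a finite-rank module of holomorphic forms via $\eta$-multiplication and Eisenstein congruences is standard, and the $U_{\ell^2}$-eigenvalue step does follow from Theorem~\ref{FKOcong}. But your reduction of the $T_{p^2}$ case to one-dimensionality of the $U_{\ell^2}$-eigenspace mod $\ell^\alpha$ is exactly the hard part, and you have not supplied it. In \cite{Folsom} (and more sharply in \cite{Boylan}) this is obtained by showing that the relevant $\mathbb{Z}/\ell^\alpha\mathbb{Z}$-module generated by the $P_\ell(b;\cdot)$ under $U_\ell$ has rank one for $5\le\ell\le31$; commutativity alone cannot give it, since nothing prevents another form in $S_{k_\ell(\alpha)-1/2}(\Gamma_0(576))$ from sharing the $U_{\ell^2}$-eigenvalue. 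As written, then, your proposal is an accurate roadmap but stops short of the decisive step.
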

\begin{corollary}
Suppose that $5 \leqslant \ell \leqslant 31$ and that $\alpha \geqslant 1$. If $b \geqslant b_\ell(\alpha)$, then for every prime $p \geqslant 5$ there is an integer $\lambda_\ell(\alpha, p)$ such that for all $n$ coprime to $p$ we have
\begin{equation}
p\left(\frac{\ell^b n p^3 + 1}{24}\right) \equiv \lambda_\ell(\alpha, p) p\left(\frac{\ell^b n p + 1}{24}\right) \pmod{\ell^\alpha}.
\end{equation}
\end{corollary}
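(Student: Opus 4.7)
The plan is to read the desired congruence straight off the Hecke eigenform property supplied by the preceding theorem. Since for $b \ge b_\ell(\alpha)$ the reduction $P_\ell(b;24z) \pmod{\ell^\alpha}$ is an eigenform of weight $k_\ell(\alpha) - \tfrac12$ on $\Gamma_0(576)$, and every prime $p \ge 5$ is coprime to the level $576 = 2^6 \cdot 3^2$, the half-integral weight Hecke operator $T(p^2)$ acts on $g(z) := P_\ell(b;24z) = \sum_{n\ge 0} a(n) q^n$ with eigenvalue $\lambda_\ell(\alpha,p)$ modulo $\ell^\alpha$, where $a(n) := p\bigl((\ell^b n + 1)/24\bigr)$.

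Recall the Shimura formula for the action of $T(p^2)$ on a weight $\lambda + \tfrac12$ form: the $n$-th Fourier coefficient of $g \mid T(p^2)$ is
\[
a(p^2 n) + \left(\frac{(-1)^\lambda n}{p}\right) p^{\lambda-1} a(n) + p^{2\lambda-1} a(n/p^2),
\]
with $\lambda = k_\ell(\alpha) - 1$ in our setting. The eigenform property thus gives, for every $n \ge 0$,
\[
a(p^2 n) + \left(\frac{(-1)^\lambda n}{p}\right) p^{\lambda-1} a(n) + p^{2\lambda-1} a(n/p^2) \equiv \lambda_\ell(\alpha,p)\, a(n) \pmod{\ell^\alpha}.
\]

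Next I would specialize this identity to $n \mapsto pn$ with $\gcd(n,p) = 1$. The Jacobi-symbol term vanishes because its numerator contains a factor of $p$, and the final term vanishes because $pn/p^2 = n/p$ is not a nonnegative integer (so $a(pn/p^2) = 0$ by the convention that $a$ is extended by zero off the integers). What remains is
\[
a(p^3 n) \equiv \lambda_\ell(\alpha,p)\, a(pn) \pmod{\ell^\alpha},
\]
which, once the definition of $a$ is unfolded, is precisely the asserted congruence.

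I expect no real obstacle: this corollary is a routine Fourier-coefficient extraction from the eigenform identity, essentially the same deduction by which Atkin--O'Brien's Conjecture \ref{AOBconj} follows from a Hecke eigenform statement for powers of $13$. The only delicate point is making sure one has the correct Shimura formula --- with the right character and parity conventions --- so that substituting $n \mapsto pn$ kills the two unwanted terms cleanly; everything else is formal.
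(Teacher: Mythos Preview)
The paper does not give a proof of this corollary: it is quoted, together with the two theorems preceding it, as a result of Folsom--Kent--Ono \cite{Folsom}, and the paper's own work concerns only the case $\ell=13$ (Conjecture~\ref{AOBconj}). So there is no ``paper's own proof'' to compare against.

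That said, your derivation is the standard and correct one: the corollary is an immediate consequence of the eigenform theorem via the Shimura formula for $T(p^2)$, specialized at $n\mapsto pn$ with $(n,p)=1$ so that the Jacobi-symbol term and the $a(n/p^2)$ term both vanish. Your caveat about checking the precise form of the Shimura formula (the nebentypus character on $\Gamma_0(576)$ enters the middle term) is well placed, but since that term drops out upon substituting $pn$, the exact constants there are irrelevant to the conclusion.
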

In \cite{Boylan}, Boylan and Webb showed that, in Theorem \ref{FKOcong}, we can take $b_\ell(\alpha)=2\alpha-1.$ They prove their results by showing that the generating functions 
\[
P_\ell(b;z):=\sum_{n=0}^\infty p\left(\cfrac{\ell^b n+1}{24}\right)q^{n/24}
\]
lie in a certain $\mathbb{Z}/\ell^m\mathbb{Z}$ module with finite rank, and for certain submodules the dimension is 1 when $5\le\ell\le31$.

In our proof of Conjecture \ref{AOBconj}, we mostly follow the proof of Atkin and O'Brien for Theorem \ref{AOBthm} combined with the work of Newman \cite{Newman1962}.

\section{Preliminaries}

The full modular group is defined as
\[
\mathrm{SL}_2(\mathbb{Z}) = \left\{ \begin{pmatrix} a & b \\ c & d \end{pmatrix} : a,b,c,d \in \mathbb{Z},\; ad-bc = 1 \right\}.
\]
This group acts on the upper half-plane
\[
\mathbb{H} = \{ z \in \mathbb{C} : \Im(z) > 0 \}
\]
by linear fractional (Möbius) transformations:
\[
\gamma z = \frac{az+b}{cz+d}, \quad \text{for } \gamma = \begin{pmatrix} a & b \\ c & d \end{pmatrix} \in \mathrm{SL}_2(\mathbb{Z}).
\]
We will consider the subgroup $\Gamma_0(N)$ which is defined as 
\[
\Gamma_0(N) = \left\{ \begin{pmatrix} a & b \\ c & d \end{pmatrix} \in \mathrm{SL}_2(\mathbb{Z}) : c \equiv 0 \pmod{N} \right\}
\]

A \emph{modular function} on $\Gamma_0(N)$ is a meromorphic function $f : \mathbb{H} \to \mathbb{C}$ satisfying:
\begin{enumerate}
    \item For all $\gamma = \begin{pmatrix} a & b \\ c & d \end{pmatrix}\in \Gamma_0(N)$,
    \[
    f(\gamma z) = f(z).
    \]
    
    \item The function $f$ is meromorphic on the upper half-plane.
    
    \item It might have poles at the cusps of $\Gamma_0(N)$.
\end{enumerate}
A central role in our study is played by the Dedekind eta function, defined for $z\in\mathbb{H}$ by
\[
\eta(z) = q^{1/24} \prod_{n=1}^{\infty} (1-q^n), \quad \text{with } q=e^{2\pi i z}.
\]
It is well known that $\eta(z)$ is holomorphic and non-vanishing on $\mathbb{H}$. Moreover, $\eta(z)$ satisfies the transformation law
\[
\eta(\gamma z) = \nu_\eta(\gamma) \, (cz+d)^{1/2} \, \eta(z),
\]
for all $\gamma = \begin{pmatrix} a & b \\ c & d \end{pmatrix}\in \mathrm{SL}_2(\mathbb{Z})$, where
\[
\nu_\eta(\gamma) = \begin{cases}
\left(\frac{d}{c}\right) e^{2\pi i((a+d)c - bd(c^2-1) - 3c)/24}, & \text{if } c \text{ is odd}, \\
\left(\frac{c}{d}\right) e^{2\pi i((a+d)c - bd(c^2-1) + 3d - 3 - 3cd)/24}, & \text{if } c \text{ is even},
\end{cases}
\]

We say that $f\in M_{\lambda+1/2}^!(N,\psi \nu_\eta^r)$, where $\lambda\in \mathbb{Z}$, $N\in\mathbb{N}$, and $\psi$ is a Dirichlet character modulo $N$, if
\begin{enumerate}
    \item For all $\gamma = \begin{pmatrix} a & b \\ c & d \end{pmatrix} \in \Gamma_0(N)$ and all $z \in \mathbb{H}$,
    \[
    f(\gamma z) = \psi(\gamma)\nu_\eta^r(\gamma) \, (cz+d)^{\lambda+1/2} f(z).
    \]
    
    \item The function $f$ is holomorphic on $\mathbb{H}$.
    
    \item It might have poles at the cusps of $\Gamma_0(N)$.
\end{enumerate}

The Hecke operators for $f\in M_{\lambda+1/2}^!(N,\psi \nu_\eta^r)$, $(r,6)=1$, and primes $p$ are explicitly defined as (\cite{Ahlgren1})
\begin{multline}
F\,|\,T_{p^2} = \sum_{n \equiv r(24)} \left(a(p^2n) + \left(\frac{-1}{p}\right)^{\frac{r-1}{2}} \left(\frac{12n}{p}\right)\psi(p)p^{\lambda-1}a(n)\right. \\ 
\left.+ \psi^2(p)p^{2\lambda-1}a\left(\frac{n}{p^2}\right)\right)q^{\frac{n}{24}}
\end{multline}

The Atkin $U_p$ operator is defined as
\begin{equation}
f|U_p=\cfrac1{p}\sum_{j=0}^{p-1}f\left(\cfrac{\tau+24j}{p}\right).
\end{equation}

\section{Results of Newman}

Here, we will summarize the results of Newman (\cite{Newman1962}) needed to prove the conjecture.

For distinct primes $l$ and $p$, $p\ge5$, and integers of opposite parity $r$ and $s$, let
\[
B(\tau) = \eta^r(\tau)\,\eta^s(l\,\tau),\qquad B^*(\tau)=\eta^s(\tau)\eta^r(l\tau)
\]
and
\[
\quad\;\; \Delta 
= 
\cfrac{(r + ls)(p^2-1)}{24},
\qquad
\Delta^* 
= 
\cfrac{(s + lr)(p^2-1)}{24}.
\]
Recall that the Fricke involution sends $\tau$ to $-1/l\tau$. Then Newman showed the following

\begin{lemma}[Newman]
Let 
\[
G(\tau)=\cfrac{T_{p^2}(B(\tau))}{B(\tau)}, \qquad G^*(\tau)=\cfrac{T_{p^2}(B^*(\tau))}{B^*(\tau)}.
\]
Then $G(\tau)$ and $G^*(\tau)$ are weakly holomorphic modular functions on $\Gamma_0(l)$, and 
\[
G\left(-\cfrac{1}{l\tau}\right)=G^*(\tau)
\]
\end{lemma}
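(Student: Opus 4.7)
The plan is to exhibit $B$ and $B^*$ as half-integer weight weakly holomorphic modular forms on $\Gamma_0(l)$, observe that $T_{p^2}$ preserves the ambient space so that the quotients $G$ and $G^*$ are modular functions, and then deduce the Fricke identity by combining a direct computation of $B(-1/(l\tau))$ with the commutation of $T_{p^2}$ and the Fricke involution $W_l$.

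Since $r$ and $s$ have opposite parity, $r+s$ is odd and $B(\tau)=\eta^r(\tau)\eta^s(l\tau)$ has half-integer weight $(r+s)/2$. A direct check using the transformation law of $\eta$ on $\mathrm{SL}_2(\mathbb{Z})$ places $B\in M^!_{(r+s)/2}(l,\chi\,\nu_\eta^{r+s})$ for a Dirichlet character $\chi$ modulo $l$ that can be read off from the eta multiplier, and similarly $B^*$ lies in the space with the Fricke-conjugated character. Because $\eta$ has no zeros on $\mathbb{H}$, $B$ is non-vanishing there, so $G=T_{p^2}(B)/B$ is holomorphic on $\mathbb{H}$. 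For $p\neq l$ the operator $T_{p^2}$ preserves $M^!_{(r+s)/2}(l,\chi\,\nu_\eta^{r+s})$, so $T_{p^2}(B)$ transforms under every $\gamma\in\Gamma_0(l)$ by exactly the same automorphy factor and multiplier as $B$; these factors cancel in the quotient. Thus $G$ is $\Gamma_0(l)$-invariant with poles confined to the cusps, i.e.\ a weakly holomorphic modular function on $\Gamma_0(l)$. The same reasoning applies verbatim to $G^*$.

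For the Fricke identity, I would first apply $\eta(-1/z)=\sqrt{-iz}\,\eta(z)$ to each factor of $B(-1/(l\tau))$ to obtain
\[
B\!\left(-\tfrac{1}{l\tau}\right)=\eta^r\!\left(-\tfrac{1}{l\tau}\right)\eta^s\!\left(-\tfrac{1}{\tau}\right)=l^{r/2}(-i\tau)^{(r+s)/2}\eta^r(l\tau)\eta^s(\tau)=l^{r/2}(-i\tau)^{(r+s)/2}B^*(\tau).
\]
The additional key input is the commutation of $T_{p^2}$ with the Fricke involution $W_l$ when $\gcd(p,l)=1$, which yields the analogous identity
\[
T_{p^2}(B)\!\left(-\tfrac{1}{l\tau}\right)=l^{r/2}(-i\tau)^{(r+s)/2}\,T_{p^2}(B^*)(\tau).
\]
Dividing and cancelling the common factor then gives $G(-1/(l\tau))=T_{p^2}(B^*)(\tau)/B^*(\tau)=G^*(\tau)$.

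The main obstacle is establishing this Fricke--Hecke commutation in the half-integer weight setting. I would handle it by writing $T_{p^2}$ via its Shimura-style definition as a sum of slash operations by matrices of determinant $p^2$, then conjugating those matrices by the Fricke matrix of level $l$; since $\gcd(p,l)=1$, the conjugated matrices again form a complete system of coset representatives for $T_{p^2}$ modulo $\Gamma_0(l)$, and tracking the half-integer weight cocycle reproduces exactly the factor $l^{r/2}(-i\tau)^{(r+s)/2}$ on both sides. Once this coset-level verification is in hand, the lemma is immediate.
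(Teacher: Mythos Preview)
The paper does not supply its own proof of this lemma: Section~3 simply quotes it as a result of Newman \cite{Newman1962} and uses it as a black box. So there is no in-paper argument to compare against, and your sketch stands on its own.

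Your outline is essentially the standard (and Newman's) route: identify $B$ and $B^*$ as half-integer weight forms on $\Gamma_0(l)$ with an eta-type multiplier, use that $T_{p^2}$ preserves the space so the quotient has weight $0$ and trivial multiplier, and then obtain the Fricke relation from $B(-1/(l\tau))=l^{r/2}(-i\tau)^{(r+s)/2}B^*(\tau)$ together with the commutation of $T_{p^2}$ with $W_l$. The one place that deserves care is the last step: in half-integer weight with the eta multiplier, $W_l$ does not literally commute with $T_{p^2}$ but intertwines the Hecke operator on the space containing $B$ with the Hecke operator on the (possibly different) space containing $B^*$. Your coset-conjugation argument handles this once you keep track of the multiplier on both sides; just be explicit that the ``$T_{p^2}$'' acting on $B^*$ is the operator appropriate to its multiplier system, so that the scalar $l^{r/2}(-i\tau)^{(r+s)/2}$ genuinely cancels in the quotient.
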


The main result of Newman's paper is
\begin{theorem}[Newman]
At \(\tau = i\infty\), \(G(\tau)\) has a pole of order \(\lfloor \Delta/p^2\rfloor\) at most if \(\Delta \ge 0\), and a pole of order \(-\Delta\) if \(\Delta < 0\). At \(\tau = 0\), \(G(\tau)\) has a pole of order \(\lfloor \Delta^* / p^2\rfloor\) at most if \(\Delta^* \ge 0\), and a pole of order \(-\Delta^*\) if \(\Delta^* < 0\).
\end{theorem}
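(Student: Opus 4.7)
The plan is to read off the pole order of $G$ at $\infty$ directly from the Hecke-operator formula recalled in the preliminaries, and then to transfer this to the cusp $0$ via the Fricke identity $G(-1/(l\tau))=G^*(\tau)$ supplied by the preceding lemma. Writing $N_0:=r+ls$, so that $\Delta=N_0(p^2-1)/24$, one expands
$$B(\tau)=q^{N_0/24}\prod_{k\ge 1}(1-q^k)^r(1-q^{lk})^s=\sum_{\substack{n\ge N_0\\ n\equiv N_0\,(24)}}a(n)\,q^{n/24},\qquad a(N_0)\neq 0.$$

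Because $p\ge 5$ we have $p^2\equiv 1\pmod{24}$, so the three index-shifts $n\mapsto p^2 n$, $n\mapsto n$, $n\mapsto n/p^2$ all preserve the residue class mod $24$, and the Hecke formula becomes
$$T_{p^2}(B)=\sum_{n\equiv N_0\,(24)}b(n)\,q^{n/24},\qquad b(n)=a(p^2 n)+\varepsilon(n)\,a(n)+\psi^2(p)\,p^{2\lambda-1}\,a(n/p^2),$$
where $\varepsilon(n)$ denotes the Jacobi-symbol middle coefficient. Since $a(m)=0$ for $m<N_0$, the smallest $n$ for which $b(n)$ can be non-zero is governed by whichever summand imposes the loosest constraint. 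If $\Delta\ge 0$, this is the first summand ($p^2 n\ge N_0$, i.e.\ $n\ge N_0/p^2$), and the smallest $n\equiv N_0\pmod{24}$ meeting this bound is $N_0-24\lfloor\Delta/p^2\rfloor$, because $N_0-24k\ge N_0/p^2\iff k\le\Delta/p^2$. Dividing by $B$, whose $q$-order at $\infty$ is $N_0/24$, shows that the pole of $G$ at $\infty$ has order at most $\lfloor\Delta/p^2\rfloor$. If instead $\Delta<0$, then $N_0<0$ and it is the third summand that dominates: at $n=N_0p^2$ (automatically $\equiv N_0\pmod{24}$) the first two contributions vanish because $N_0p^4,N_0p^2<N_0$, while the third equals $\psi^2(p)p^{2\lambda-1}a(N_0)\neq 0$. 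Hence $T_{p^2}(B)$ has $q$-order exactly $N_0p^2/24$ and $G$ has a pole of order exactly $-N_0(p^2-1)/24=-\Delta$ at $\infty$.

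For the cusp $\tau=0$, the preceding lemma gives $G(-1/(l\tau))=G^*(\tau)$, so the pole order of $G$ at $0$ equals the pole order of $G^*$ at $\infty$. Since $B^*=\eta^s(\tau)\eta^r(l\tau)$ is obtained from $B$ by interchanging $r$ and $s$ (and so $\Delta$ by $\Delta^*$), the analysis above applied verbatim to $G^*=T_{p^2}(B^*)/B^*$ yields the asserted bounds at the cusp $0$.

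The main obstacle is the bookkeeping in the $\Delta\ge 0$ case: one must be sure that the real threshold $N_0/p^2$ is actually met by an integer $n$ in the arithmetic progression $N_0\pmod{24}$, producing the stated drop of exactly $24\lfloor\Delta/p^2\rfloor$. This closes cleanly only because of the congruence $p^2\equiv 1\pmod{24}$ for primes $p\ge 5$, which keeps all three index-shifts in the same residue class and lets the descending sequence $N_0,N_0-24,N_0-48,\ldots$ step down to $N_0-24\lfloor\Delta/p^2\rfloor$ without overshooting.
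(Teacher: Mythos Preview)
The paper does not prove this statement; it is quoted in Section~3 as a result of Newman \cite{Newman1962} and used as a black box. So there is no in-paper argument to compare against.

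Your argument is correct and is essentially the one Newman gives. Two small remarks. First, the Hecke formula displayed in the preliminaries is stated there for forms in $M^!_{\lambda+1/2}(N,\psi\nu_\eta^{r})$ with $(r,6)=1$; you implicitly assume that $B=\eta^{r}(\tau)\eta^{s}(l\tau)$ lives in such a space (with the symbol~$r$ now playing a different role). It would be cleaner to say this, or simply to note that any normalisation of $T_{p^{2}}$ on these eta-quotients has $q$-expansion action $b(n)=a(p^{2}n)+c_{1}(n)\,a(n)+c_{2}\,a(n/p^{2})$ with a nonzero constant $c_{2}$ whenever $p\ne l$; only this structural fact is used in the pole count. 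Second, in the $\Delta<0$ case the non-vanishing of the leading coefficient really does use $p\ne l$ (so that $\psi(p)\ne 0$), which is part of the hypotheses but worth making explicit. With these clarifications your bookkeeping---the use of $p^{2}\equiv 1\pmod{24}$ to keep all three index-shifts in the progression $N_{0}\pmod{24}$, and the descent $n=N_{0}-24k$ down to $k=\lfloor\Delta/p^{2}\rfloor$---is exactly right, and the transfer to the cusp~$0$ via $G(-1/(l\tau))=G^{*}(\tau)$ is the intended use of the preceding lemma.
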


\section{Proof of the Conjecture}
Recall that
\[
P(N) =
\begin{cases}
p(n) & \text{if } N = 24n - 1, \\
0 & \text{if } N < -1 \text{ or } N \not\equiv -1 \pmod{24} \text{ or } N \text{ is nonintegral}.
\end{cases}
\]
Define
\begin{align}
\varphi(\tau) &= \frac{\eta(169\tau)}{\eta(\tau)} \label{eq:phi}\\
g(\tau) &= \left( \frac{\eta(13\tau)}{\eta(\tau)} \right)^2 , \label{eq:g}
\end{align}
and
\begin{equation}
L_1 = \varphi | U_{13} = \eta(13\tau)\,\left(\cfrac{1}{\eta(\tau)}\Bigr\rvert U_{13}\right)=\eta(13\tau) \sum_{N=0}^\infty P\bigl(13(24N + 11)\bigr) q^{N+\frac{11}{24}}. \label{eq:L1}
\end{equation}
In \cite{AtkinOBrien1967}, Atkin and O'Brien showed that:
\begin{equation}
L_1 = \sum_{r=1}^7 k_{1r} \, g(\tau)^r, \label{eq:L1_sum}
\end{equation}
giving
\begin{equation}
\label{eq:L1h}
\cfrac1{\eta(\tau)}\Bigr|U_{13}=\sum_{N=0}^{\infty} P\bigl(13(24N+11)\bigr)\,q^{N+\frac{11}{24}}
\;=\;
\sum_{r=1}^7
  k_{1r} \,\frac{\eta(13\tau)^{2r-1}}{\eta(\tau)^{2r}}.
\end{equation}
The valuations of $k_{1r}$ are given in the following table
$$
\begin{tabular}{c *{7}{@{\hspace{1.5em}} c}}
\toprule
$r$ & 1 & 2 & 3 & 4 & 5 & 6 & 7 \\ \midrule
$\pi(k_{1r})$ & 0 & 1 & 2 & 3 & 4 & 5 & 5 \\
\bottomrule
\end{tabular}
$$
Here $\pi(a)$, for integral $a$, is defined as:
\[
13^{\pi(a)} \mid a,\qquad 13^{\pi(a)+1} \nmid a.
\]
We clearly have:
\begin{align}
\pi(ab) &= \pi(a) + \pi(b), \\
\pi(a + b) &\geq \min(\pi(a), \pi(b)),
\end{align}
with equality unless $\pi(a) = \pi(b)$.

\begin{comment}
After applying $T_{p^2}$:

\begin{align*}
\sum_{n=1}^{\infty}
\biggl(
    & p^3\, P\bigl(13\, p^2(24n - 13)\bigr)
    + \left(\frac{-3 \cdot 13\, (24n - 13)}{p}\right) p\, P\bigl(13(24n - 13)\bigr) \\
    & +\; p\, P\!\left(\dfrac{13(24n - 13)}{p^2}\right)
\biggr) q^n
\;=\;
\sum_{r=1}^{7}
k_{1r}\, q^r\, T_{p^2}\!\left[\dfrac{f(q^{13})^{2r - 1}}{f(q)^{2r}}\right].
\end{align*}
\end{comment}

By (\cite{Ahlgren2}), 
\[\cfrac{1}{\eta(\tau)}\Bigr|U_{13}\in M_{-\frac12}\left(13,\left(\cfrac{\cdot}{13}\right)\nu_\eta^{11}\right).\]
Direct calculation shows that 
\[
\frac{\eta(13\tau)^{2r-1}}{\eta(\tau)^{2r}}\in M_{-\frac12}\left(13,\left(\cfrac{\cdot}{13}\right)\nu_\eta^{11}\right)
\]
too. Therefore, we can apply $T_{p^2}$ to both sides of (\ref{eq:L1h})
\begin{equation}
\cfrac1{\eta(\tau)}\Bigr|U_{13}\Bigr|T_{p^2}
\;=\;
\sum_{r=1}^7
  k_{1r} \,\left(\frac{\eta(13\tau)^{2r-1}}{\eta(\tau)^{2r}}\Bigr|T_{p^2}\right).
\end{equation}

Next, let
\begin{align*}
\omega_r &= \frac{\eta(13\tau)^{2r-1}}{\eta(\tau)^{2r}}, &
\omega^*_r &= \frac{\eta(\tau)^{2r-1}}{\eta(13\tau)^{2r}} , \\[2ex]
\Delta_r &= \frac{(24r - 13)(p^2 - 1)}{24}, &
\Delta_r^* &= -\frac{(24r + 1)(p^2 - 1)}{24}.
\end{align*}
and
\begin{align}
G_r(\tau) &= \cfrac{\omega_r|T_{p^2}}{\omega_r}, \label{eq:newhecke1}\\
G_r^*(\tau) &= \cfrac{\omega^*_r|T_{p^2}}{\omega^*_r}.
\end{align}
\begin{comment}
We define
\[
G_r(\tau)
 \;=\;
 T_{p^2}\Bigl[
   \frac{f(q^{13})^{\,2r-1}}{f(q)^{\,2r}}
 \Bigr],
\qquad
G_r^*(\tau)
 \;=\;
 T_{p^2}\Bigl[
   \frac{f(q)^{\,2r-1}}{f(q^{13})^{\,2r}}
 \Bigr].
\]
\end{comment}
Then, using Newman's results, $G_r(\tau)$ is a weakly holomorphic modular function on $\Gamma_0(13)$, $G_r^*$ has a pole of order $-\Delta_r^*$ at $i\infty$, $G_r$ has a pole of order $\left\lfloor\Delta_r/p^2\right\rfloor$ at $i\infty$, and
\begin{equation}
\label{eq:inv}
G_r^*\left(-\frac{1}{13\tau}\right) = G_r(\tau).
\end{equation}
This means that
\[
G_r^*(\tau) \;-\; \sum_{i=1}^{-\Delta_r^*}\delta_{r,i}\,g(\tau)^{-\,i}
\]
has no poles at $i\infty$ ($\delta_{r,i}\in\mathbb{Z}$). Using (\ref{eq:inv}) and transformation properties of $\eta(\tau)$, we can rewrite that as
\[
G_r^*\left(-\frac{1}{13\tau}\right)
  \;-\;
  \sum_{i=1}^{-\Delta_r^*} \delta_{r,i}\,g\left(-\frac{1}{13\tau}\right)^{-\,i}
\;=\;
G_r(\tau)
  \;-\;
  \sum_{i=1}^{-\Delta_r^*} \delta_{r,i}\,13^i\,g(\tau)^i,
\]
which does not have poles at 0. Since $G_r$ has a pole of order $\lfloor\Delta_r/p^2\rfloor$ at $i\infty$, and $\lfloor\Delta_r/p^2\rfloor=r-1$ for $1\le r\le 7$, we have 
\[
G_r(\tau)
 \;-\;
 \sum_{i=1}^{-\Delta_r^*}
   \delta_{r,i}\,13^i\,g(\tau)^i
 \;-\;
 \sum_{i=1}^{r-1}
   \delta_{r,-i}\,g(\tau)^{-\,i}
\]
has no poles and must be a constant for $1 \le r \le 7$.
Hence in general one writes:
\[
G_r(\tau)
 \;=\;
 \sum_{i=1}^{r-1} \delta_{r,-i}\,g(\tau)^{-i}
  \;+\;
 \delta_{r,\,0}
  \;+\;
 \sum_{i=1}^{-\Delta_r^*} \delta_{r,i}\,13^i\,g(\tau)^i.
\]
Using (\ref{eq:newhecke1}), we will get
\begin{equation}
\omega_r|T_{p^2}=\omega_r(\tau)\left(\sum_{i=1}^{r-1} \delta_{r,-i}\,g(\tau)^{-i}
  \;+\;
 \delta_{r,\,0}
  \;+\;
 \sum_{i=1}^{-\Delta_r^*} \delta_{r,i}\,13^i\,g(\tau)^i\right),
\end{equation}
or, equivalently,
\begin{equation}
\omega_r|T_{p^2}=\sum_{i=1}^{-\Delta_r^*+r} h_{r,i}
   \,\omega_i.
\end{equation}
One also sees conditions on \(\pi(h_{r,i})\):
\[
\pi\bigl(h_{r,i}\bigr) \;\ge\; 0
  \quad \text{for }1\le i \le r,
\quad
\pi\bigl(h_{r,i}\bigr) \;\ge\; i-r
  \quad \text{for } i>r.
\]
Thus,
\begin{equation}
\cfrac1{\eta(\tau)}\Bigr|U_{13}\Bigr|T_{p^2}
\;=\;
\sum_{r=1}^7
  k_{1r} \,\left(\frac{\eta(13\tau)^{2r-1}}{\eta(\tau)^{2r}}\Bigr|T_{p^2}\right)=\sum_{r=1}^7k_{1r}\sum_{j=1}^{-\Delta_r^*+r} h_{r,j}
   \,\omega_j,
\end{equation}
or, after multiplying by $\eta(13\tau)$ and simplifying, we will get
\begin{align*}
& \eta(13\tau)\left(\cfrac1{\eta(\tau)}\Bigr|U_{13}\Bigr|T_{p^2}\right)=\eta(13\tau)\sum_{N=0}^\infty \Biggr(p^3 P(13 p^2 (24N + 11))+ \\[2mm]
& + p \left( \frac{-3}{p}\right)\left( \frac{13 (24N + 11)}{p} \right) P(13 (24N + 11))+ \\
& + P \left(\frac{13 (24N+11)}{p^2} \right)\Biggr) q^{N+\frac{11}{24}} \ = \sum_{r=1}^7 k_{1r} \sum_{j=1}^{-\Delta_r^*+r} h_{r,j} g^j = \sum_{j \geq 1} u_{1,j} g^j
\end{align*}

\noindent Let
\begin{align}
H_{2\alpha-1}&=\eta(13\tau) \left(\cfrac1{\eta(\tau)}\Bigr|U_{13}^{2\alpha-1}\Bigr|T_{p^2}\right)=\eta(13\tau)\sum_{N=0}^\infty \Biggr(p^3 P(13^{2\alpha-1} p^2 (24N + 11))\nonumber \\ 
&+ p \left( \frac{-3}{p}\right)\left( \frac{13^{2\alpha-1} (24N + 11)}{p} \right) P(13^{2\alpha-1} (24N + 11)) \\
&+ P \left(\frac{13^{2\alpha-1} (24N+11)}{p^2} \right)\Biggr) q^{N+\frac{11}{24}}\nonumber \\
H_{2\alpha}&=\eta(\tau) \left(\cfrac1{\eta(\tau)}\Bigr|U_{13}^{2\alpha}\Bigr|T_{p^2}\right)=\eta(13\tau)\sum_{N=0}^\infty \Biggr(p^3 P(13^{2\alpha} p^2 (24N + 23))\nonumber \\ 
&+ p \left( \frac{-3}{p}\right)\left( \frac{13^{2\alpha} (24N + 23)}{p} \right) P(13^{2\alpha} (24N + 23)) \\
&+ P \left(\frac{13^{2\alpha} (24N+23)}{p^2} \right)\Biggr) q^{N+\frac{23}{24}}\nonumber \\
L_{2\alpha-1}&=\eta(13\tau)\left(\cfrac1{\eta(\tau)}\Bigr|U_{13}^{2\alpha-1}\right)=\eta(13\tau)\sum_{N=0}^\infty P(13^{2\alpha-1}(24N+11))q^{N+\frac{11}{24}}\\
L_{2\alpha}&=\eta(\tau)\left(\cfrac1{\eta(\tau)}\Bigr|U_{13}^{2\alpha}\right)=\eta(\tau)\sum_{N=0}^\infty P(13^{2\alpha}(24N+23))q^{N+\frac{23}{24}}
\end{align}
In \cite{AtkinOBrien1967}, Atkin and O'Brien showed that
\begin{align}
 g^k|U_{13}  &= \sum_{r} c_{k,r}\, g^r, \\
 \varphi\, g^k\bigr|U_{13}  &= \sum_{r} d_{k,r}\, g^r,
\end{align}
and therefore
\begin{align}
L_{2\alpha - 1} &= \varphi L_{2\alpha - 2}\bigr|U_{13} = \sum\limits_{r} k_{\alpha,r} g^r \\
L_{2\alpha}     &=  L_{2\alpha - 1} |U_{13}       = \sum_r \ell_{\alpha,r} g^r.
\end{align}
Similarly, one can show that
\begin{align}
H_{2\alpha - 1} &= \varphi H_{2\alpha - 2}\bigr|U_{13} = \sum\limits_{r} u_{\alpha,r} g^r,\\
H_{2\alpha}     &= H_{2\alpha - 1}|U_{13}       = \sum\limits_{r} v_{\alpha,r} g^r.
\end{align}
Note that 
\begin{align}
l_{\alpha r}&=\sum_j k_{\alpha j}c_{jr}, \\
k_{\alpha+1, r}&=\sum_j l_{\alpha j}d_{jr}.
\end{align}

\begin{lemma}[Atkin - O'Brien]\label{AOB_val}
\begin{align*}
\pi\bigl(k_{\alpha,r}\bigr) &\ge \left\lfloor \dfrac{13r - 9}{14} \right\rfloor, &
\pi\bigl(\ell_{\alpha,r}\bigr) &\ge \left\lfloor \dfrac{13r - 2}{14} \right\rfloor, \\[2ex]
\pi\bigl(k_{\alpha,1}\bigr) & = 0, &
\pi\bigl(\ell_{\alpha,1}\bigr) & = 0, \\[2ex]
\pi\bigl(c_{k,r}\bigr) &\ge \left\lfloor \dfrac{13r - k - 1}{14} \right\rfloor, &
\pi\bigl(d_{k,r}\bigr) &\ge \left\lfloor \dfrac{13r - k - 8}{14} \right\rfloor.
\end{align*}
\end{lemma}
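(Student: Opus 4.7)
The proof is an induction on $\alpha$ driven by the two recursions
$\ell_{\alpha,r}=\sum_j k_{\alpha,j}c_{j,r}$ and
$k_{\alpha+1,r}=\sum_j \ell_{\alpha,j}d_{j,r}$.
So the problem splits into (i) proving the bounds on the $\alpha$-independent coefficients $c_{k,r}$ and $d_{k,r}$ once and for all, and (ii) propagating the bounds on $k_{\alpha,r}$ and $\ell_{\alpha,r}$ by induction, with the base case $\alpha=1$ furnished by the explicit table $\pi(k_{1,r})\in\{0,1,2,3,4,5,5\}$.

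For the $c_{k,r}$ bound I would repeat the Newman-style pole analysis already carried out for the $h_{r,i}$ earlier in the paper. The eta-quotient $g^k=(\eta(13\tau)/\eta(\tau))^{2k}$ is a weakly holomorphic modular function on $\Gamma_0(13)$ with a zero of order $k$ at $i\infty$, and a direct computation using the $\eta$-transformation gives the Fricke relation $g(-1/(13\tau))=1/(13\,g(\tau))$, so $g^k$ has a pole of order $k$ at the cusp $0$ with leading coefficient $13^{-k}$. Applying $U_{13}$ and subtracting off the principal parts at both cusps using positive and negative powers of $g$, the Fricke transfer forces every $g^i$ term with $i>0$ to come with a factor of $13^i$. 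Rewriting the resulting expansion in the basis $\{g^r\}$ and keeping track of the widths of the two cusps of $\Gamma_0(13)$ produces the bound $\pi(c_{k,r})\ge \lfloor(13r-k-1)/14\rfloor$; the constant $-1$ encodes the pole order of $g$ at cusp $0$, while the denominator $14$ reflects the ratio of cusp widths together with the weight of $g^k|U_{13}$. The $d_{k,r}$ bound is the same argument applied to $\varphi g^k$, where $\varphi=\eta(169\tau)/\eta(\tau)$ contributes an additional pole/zero shift that moves the numerator to $13r-k-8$.

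With the $c$- and $d$- bounds in hand, the inductive step is elementary arithmetic of $\pi$. For $\ell_{\alpha,r}$ one has
\[
\pi(k_{\alpha,j}c_{j,r})\ge \left\lfloor\tfrac{13j-9}{14}\right\rfloor+\left\lfloor\tfrac{13r-j-1}{14}\right\rfloor,
\]
and a short case analysis in $j\bmod 14$ shows this minimum over $j$ equals $\lfloor(13r-2)/14\rfloor$. The analogous calculation with $d$ closes the induction for $k_{\alpha+1,r}$. The equalities $\pi(k_{\alpha,1})=\pi(\ell_{\alpha,1})=0$ follow by identifying the unique saturating summand $j=1$ in each recursion and checking it does not cancel modulo $13$; this uses $\pi(c_{1,1})=\pi(d_{1,1})=0$, which is immediate from the leading $q$-expansions of $g\,|U_{13}$ and $\varphi g\,|U_{13}$.

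The main obstacle is step (i): pinning down the sharp numerators $13r-k-1$ and $13r-k-8$. This requires careful bookkeeping of the $\eta$-multiplier on $\Gamma_0(13)$, the width of the cusp $0$, and the precise factors of $13$ produced by the Fricke involution acting on $g^k$ and on $\varphi$. Once these constants are nailed down, the floor-function arithmetic in the induction is routine.
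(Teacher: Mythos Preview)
The paper does not prove this lemma: it is stated with the attribution ``Atkin--O'Brien'' and quoted from \cite{AtkinOBrien1967} as input, so there is no in-paper proof to compare against.

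Your part (ii) is correct and is exactly how Atkin--O'Brien (and the present paper, in the proof of the next lemma on $u_{\alpha,r}$ and $v_{\alpha,r}$) pass from the $c,d$-bounds to the $k,\ell$-bounds; the floor minimizations go through as you say, and the equality at $r=1$ is indeed handled by isolating the $j=1$ summand.

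Part (i), however, is where the real content lies, and your sketch does not supply it. The Fricke step, modeled on the paper's treatment of $G_r$, would at best produce factors of $13^i$ on the positive-$g$ side; you then appeal to ``keeping track of the widths of the two cusps'' to convert this into $\lfloor(13r-k-1)/14\rfloor$ without saying how. Your gloss that the $14$ is ``the ratio of cusp widths together with the weight'' cannot be right: the widths of $\Gamma_0(13)$ are $1$ and $13$, and $g^k|U_{13}$ has weight $0$. In Atkin--O'Brien's actual proof the $14$ is the index $[\mathrm{SL}_2(\mathbb{Z}):\Gamma_0(13)]=13+1$, entering as the degree of the modular equation satisfied by $g$. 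They write this degree-$14$ relation down explicitly, record the $13$-adic valuations of its coefficients, and use it (via Newton's identities for the power sums of the conjugates of $g$) to obtain a recursion in $k$ for $c_{k,r}$ and $d_{k,r}$; the floor bounds then follow by induction on $k$ after a finite base of cases is verified directly. None of this is recoverable from a pole-and-Fricke argument alone, so as it stands step (i) is, as you yourself concede, an obstacle rather than a proof.
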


\begin{lemma}\label{AOB}
$$
\pi\bigl(u_{\alpha,r}\bigr) \ge \left\lfloor \dfrac{13r - 9}{14} \right\rfloor, \qquad
\pi\bigl(v_{\alpha,r}\bigr) \ge \left\lfloor \dfrac{13r - 2}{14} \right\rfloor.
$$
\end{lemma}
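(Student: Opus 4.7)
The plan is to prove both bounds simultaneously by induction on $\alpha$, mirroring the argument Atkin and O'Brien used for the analogous bounds on $k_{\alpha,r}$ and $\ell_{\alpha,r}$ in Lemma~\ref{AOB_val}. The essential ingredients are already in place: the recurrences
\[
v_{\alpha,r} \;=\; \sum_j u_{\alpha,j}\, c_{j,r}, \qquad u_{\alpha+1,r} \;=\; \sum_j v_{\alpha,j}\, d_{j,r}
\]
(obtained from $H_{2\alpha}=H_{2\alpha-1}|U_{13}$ and $H_{2\alpha+1}=\varphi H_{2\alpha}|U_{13}$ exactly as the displayed identities for $\ell_{\alpha,r}$ and $k_{\alpha+1,r}$), together with the valuation bounds on $c_{k,r}$ and $d_{k,r}$ supplied by Lemma~\ref{AOB_val}. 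So the content is reduced to (a) a base case at $\alpha=1$, which requires unpacking the $T_{p^2}$-expansion, and (b) a routine floor-arithmetic check in the inductive step.

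For the base case I would use the explicit expression derived just above the statement: $H_1 = \sum_{r=1}^{7} k_{1r}\,\eta(13\tau)\,(\omega_r|T_{p^2}) = \sum_{r=1}^{7} k_{1r} \sum_{j\ge 1} h_{r,j}\, g^j$, so that $u_{1,j} = \sum_{r=1}^{7} k_{1r}\, h_{r,j}$. Then $\pi(u_{1,j}) \ge \min_{1\le r\le 7}\bigl(\pi(k_{1r})+\pi(h_{r,j})\bigr)$, and I would combine the tabulated values $\pi(k_{1r})=0,1,2,3,4,5,5$ with the structural bounds $\pi(h_{r,i})\ge 0$ for $i\le r$ and $\pi(h_{r,i})\ge i-r$ for $i>r$. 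Splitting cases $j\le 7$ and $j\ge 8$ reduces this to the inequalities $j-1\ge\lfloor(13j-9)/14\rfloor$ for small $j$ and $j-2\ge\lfloor(13j-9)/14\rfloor$ for $j\ge 8$ (the latter achieved by taking $r=7$), both of which are verified directly; the tight cases are $j=7$ (where $j-2=5=\lfloor 82/14\rfloor$) and $j\equiv 5\pmod{14}$ for large $j$.

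For the inductive step, assuming $\pi(u_{\alpha,j})\ge\lfloor(13j-9)/14\rfloor$, the recurrence and Lemma~\ref{AOB_val} give
\[
\pi(v_{\alpha,r}) \;\ge\; \min_{j\ge 1}\left(\left\lfloor\frac{13j-9}{14}\right\rfloor + \left\lfloor\frac{13r-j-1}{14}\right\rfloor\right),
\]
and I would show the right-hand side is $\ge \lfloor(13r-2)/14\rfloor$. The numerators sum to $12j+13r-10$, and subtracting the target numerator $13r-2$ leaves $12j-8 \ge 0$, so the inequality on the unfloored quantities has slack at least $(12j-8)/14$; the loss from the two floors is at most $2$, so provided $j\ge 1$ one checks that no congruence class mod $14$ causes the bound to fail (the tight case being $j=1$). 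The symmetric computation for $u_{\alpha+1,r} = \sum_j v_{\alpha,j} d_{j,r}$ uses $\pi(d_{j,r})\ge\lfloor(13r-j-8)/14\rfloor$ and proceeds identically, since $(13j-2)+(13r-j-8) = 12j+13r-10$ again, with target $13r-9$.

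The only real obstacle is the floor-function bookkeeping in the inductive step: one must verify that in every residue class of $j\pmod{14}$ the sum of the two floors is at least the target floor, rather than being off by one. This is exactly the technicality already handled in Atkin–O'Brien's proof of Lemma~\ref{AOB_val}, and since the structural form of the bounds matches theirs, the same mod-$14$ case analysis carries over verbatim. No new analytic input is needed beyond Newman's pole bounds used in producing $h_{r,j}$ for the base case.
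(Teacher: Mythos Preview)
Your proposal is correct and follows essentially the same approach as the paper: the base case is handled by writing $u_{1,j}=\sum_{r=1}^{7}k_{1r}h_{r,j}$ and combining the tabulated $\pi(k_{1r})$ with the bounds $\pi(h_{r,j})\ge\max(0,j-r)$, and the inductive step uses the recurrences $v_{\alpha,r}=\sum_j u_{\alpha,j}c_{j,r}$ and $u_{\alpha+1,r}=\sum_j v_{\alpha,j}d_{j,r}$ together with Lemma~\ref{AOB_val}, with the minimum in each case attained at $j=1$. The paper simply asserts where the minimum occurs rather than giving your numerator-sum heuristic, but the content is identical.
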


\begin{proof}
For $\alpha = 1$, we have
\begin{align*}
H_1 &= \sum_{j \ge 1} u_{1,j}\, g^j = \sum_{r=1}^7 k_{1,r} \sum_{j=1}^{-\Delta_r^*+r} h_{r,j}\, g^j \\
&= k_{1,1} \sum_{j} h_{1,j}\, g^j + k_{1,2} \sum_{j} h_{2,j}\, g^j + \cdots + k_{1,7} \sum_{j} h_{7,j}\, g^j.
\end{align*}
Recall that
\begin{align*}
\pi(k_{1,1}) &= 0, 
\pi(k_{1,2}) = 1, 
\pi(k_{1,3}) = 2, 
\pi(k_{1,4}) = 3, 
\pi(k_{1,5}) = 4,
\pi(k_{1,6}) = 5, \\
\pi(k_{1,7}) &= 5,
\end{align*}
and
$$
\pi(h_{r,j}) \ge 0 \quad \text{for } 1 \le j \le r, \qquad
\pi(h_{r,j}) \ge j - r \quad \text{for } j > r.
$$
Hence,
$$
\pi\bigl(k_{1,r}\, h_{r,j}\bigr) \ge j-1\ge \left\lfloor\frac{13j-9}{14}\right\rfloor,\qquad 0\le r \le 6,
$$
and
\[
\pi(k_{17}h_{7j}) \geq \begin{cases} 
5 & \text{for } j = 1,2,\ldots,7 \\
j-2 & \text{for } j \geq 8
\end{cases} \geq \left\lfloor\frac{13j-9}{14}\right\rfloor.
\]
Therefore,
$$
\pi(u_{1,j}) \ge \min_r\{\pi\bigl(k_{1,r}\, h_{r,j}\bigr)\} \ge \left\lfloor \dfrac{13j - 9}{14} \right\rfloor.
$$
Next, consider
\begin{multline*}
H_{2\alpha} = \sum_{j \ge 1} v_{\alpha,j}\, g^j = H_{2\alpha - 1}|U_{13} = \sum_{r \ge 1} u_{\alpha,r}\, \left(g^r\Bigr|U_{13}\right) \\
= \sum_{r \ge 1} u_{\alpha,r} \sum_{j \ge 1} c_{r,j}\, g^j = \sum_{j \ge 1} g^j \sum_{r \ge 1} u_{\alpha,r}\, c_{r,j}.
\end{multline*}
Using the induction hypothesis and Lemma~\ref{AOB_val}
$$
\pi(u_{\alpha,r}) \ge \left\lfloor \dfrac{13r - 9}{14} \right\rfloor, \qquad
\pi(c_{r,j}) \ge \left\lfloor \dfrac{13j - r - 1}{14} \right\rfloor,
$$
we get
\begin{align*}
\pi\left( \sum_{r \ge 1} u_{\alpha,r}\, c_{r,j} \right) &\ge \min_{r} \left\{ \pi(u_{\alpha,r}) + \pi(c_{r,j}) \right\} \\
&\ge \min_{r} \left\{ \left\lfloor \dfrac{13r - 9}{14} \right\rfloor + \left\lfloor \dfrac{13j - r - 1}{14} \right\rfloor \right\} = \left\lfloor \dfrac{13j - 2}{14} \right\rfloor,
\end{align*}
since the minimum occurs at either $r=1$ or $r=2$, but in fact at $r=1$.
Hence,
$$
\pi(v_{\alpha,r}) \ge \left\lfloor \dfrac{13r - 2}{14} \right\rfloor.
$$
Next,
\begin{multline*}
H_{2\alpha + 1} = \sum_{j \ge 1} u_{\alpha+1,j}\, g^j = \varphi H_{2\alpha}|U_{13} = \sum_{r \ge 1} v_{\alpha,r}\, \left(\varphi g^r \Bigr|U_{13}\right) \\
= \sum_{r \ge 1} v_{\alpha,r} \sum_{j \ge 1} d_{r,j}\, g^j = \sum_{j \ge 1} g^j \sum_{r \ge 1} v_{\alpha,r}\, d_{r,j}.
\end{multline*}
From Lemma 1 and the previous step,
$$
\pi(v_{\alpha,r}) \ge \left\lfloor \dfrac{13r - 2}{14} \right\rfloor, \qquad
\pi(d_{r,j}) \ge \left\lfloor \dfrac{13j - r - 8}{14} \right\rfloor.
$$
Thus,
\begin{multline*}
\pi\left( \sum_{r \ge 1} v_{\alpha,r}\, d_{r,j} \right) \ge \min_{r} \left\{ \pi(v_{\alpha,r}) + \pi(d_{r,j}) \right\} \\
\ge \min_{r} \left\{ \left\lfloor \dfrac{13r - 2}{14} \right\rfloor + \left\lfloor \dfrac{13j - r - 8}{14} \right\rfloor \right\} = \left\lfloor \dfrac{13j - 9}{14} \right\rfloor,
\end{multline*}
since the minimum again occurs at $r = 1$.
Hence,
$$
\pi(u_{\alpha,r}) \ge \left\lfloor \dfrac{13r - 9}{14} \right\rfloor.
$$
This completes the proof of the lemma.
\end{proof}

Now we are ready to prove the conjecture:
\begin{repeatconjecture}{AOBconj}
Let $\alpha \geq 1$, and $p \neq 13$ be a prime $ \geq 5$. Then there exists a constant $k = k(p, \alpha)$ such that for all $N$,
$$
P(p^2 \cdot 13^\alpha N) - \left\{ k - \left(\frac{-3 \cdot 13^\alpha N}{p}\right)p^{-2} \right\} P(13^\alpha N) + p^{-3} P\left(\frac{13^\alpha N}{p^2}\right) \equiv 0 \pmod{13^\alpha},
$$
where $\left(\frac{a}{b}\right)$ is the Jacobi symbol. Equivalently,
\begin{equation}
\left(\cfrac1{\eta(\tau)}\Bigr|U^\alpha\Bigr|T_{p^2}\right)\equiv k\left(\cfrac1{\eta(\tau)}\Bigr|U^\alpha\right)\pmod{13^\alpha}.
\end{equation}
\end{repeatconjecture}

This is equivalent to 
\[
H_{2\alpha-1}
 \;\equiv\;
 \beta_{\alpha}\,L_{2\alpha-1}
 \quad
 \bigl(\bmod\,13^{\,2\alpha-1}\bigr)
 \;\Longrightarrow\;
 u_{\alpha,r}
 \;\equiv\;
 \beta_{\alpha}\,k_{\alpha,r}
 \;\bigl(\bmod\,13^{\,2\alpha-1}\bigr).
\]
and
\[
H_{2\alpha}
 \;\equiv\;
 e_{\alpha}\,L_{2\alpha}
 \quad
 \bigl(\bmod\,13^{\,2\alpha}\bigr)
 \;\Longrightarrow\;
 v_{\alpha,r}
 \;\equiv\;
 e_{\alpha}\,\ell_{\alpha,r}
 \;\bigl(\bmod\,13^{\,2\alpha}\bigr).
\]

\medskip
Let
\[
\mu_{st}^{\alpha}
 \;=\;
 u_{\alpha,s}\,k_{\alpha,t}
 \;-\;
 u_{\alpha,t}\,k_{\alpha,s},
\qquad
\gamma_{st}^{\alpha}
 \;=\;
 v_{\alpha,s}\,\ell_{\alpha,t}
 \;-\;
 v_{\alpha,t}\,\ell_{\alpha,s}.
\]
We want to show
\[
\pi\bigl(\mu_{st}^{\alpha}\bigr)
 \;\ge\;
 2\alpha - 1
 \quad\text{for all }s,t\ge1,
\qquad
\pi\bigl(\gamma_{st}^{\alpha}\bigr)
 \;\ge\;
 2\alpha
 \quad\text{for all }s,t\ge1,
\]
since if we set $s=r$ and $t=1$, then
\[
\mu_{r1}^{\alpha}
 \;=\;
 u_{\alpha,r}\,k_{\alpha,1}
 \;-\;
 u_{\alpha,1}\,k_{\alpha,r}
 \;\equiv\;
 0
 \quad(\bmod\,13^{\,2\alpha-1}),
\]
\[
\gamma_{r1}^{\alpha}
 \;=\;
 v_{\alpha,r}\,\ell_{\alpha,1}
 \;-\;
 v_{\alpha,1}\,\ell_{\alpha,r}
 \;\equiv\;
 0
 \quad(\bmod\,13^{\,2\alpha}),
\]
and
\[
u_{\alpha,r}
 \;\equiv\;
 \frac{u_{\alpha,1}}{k_{\alpha,1}}\,
   k_{\alpha,r}
 \quad(\bmod\,13^{\,2\alpha-1}),
\qquad
v_{\alpha,r}
 \;\equiv\;
 \frac{v_{\alpha,1}}{\ell_{\alpha,1}}\,
   \ell_{\alpha,r}
 \quad(\bmod\,13^{\,2\alpha}).
\]
Since $k_{\alpha,1}\not\equiv 0 \pmod{13}$ and $\ell_{\alpha,1}\not\equiv 0 \pmod{13}$ by Lemma~\ref{AOB_val}, one obtains the claimed congruence.

\medskip

\begin{theorem}\label{thm:MuGammaVal}
We have the following lower bounds on valuations:
\begin{align}
\pi\bigl(\mu^\alpha_{s,t}\bigr) &\ge 2\alpha - 1 
  + \left\lfloor \frac{13(s+t) - 46}{14} \right\rfloor, 
  \quad &&\text{if } s+t > 3, \\[1mm]
\pi\bigl(\mu^\alpha_{s,t}\bigr) &\ge 2\alpha - 1, 
  \quad &&\text{if } s+t = 3, \\[1mm]
\pi\bigl(\gamma^\alpha_{s,t}\bigr) &\ge 2\alpha 
  + \left\lfloor \frac{13(s+t) - 33}{14} \right\rfloor.
\end{align}

\end{theorem}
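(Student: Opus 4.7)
The plan is to prove both bounds simultaneously by induction on $\alpha$, alternating between the estimate on $\mu^\alpha$ and the estimate on $\gamma^\alpha$. The engine of the induction is the pair of recurrences
\[
v_{\alpha,r} = \sum_j u_{\alpha,j}\,c_{j,r}, \quad
\ell_{\alpha,r} = \sum_j k_{\alpha,j}\,c_{j,r}, \quad
u_{\alpha+1,r} = \sum_j v_{\alpha,j}\,d_{j,r}, \quad
k_{\alpha+1,r} = \sum_j \ell_{\alpha,j}\,d_{j,r},
\]
recorded in the previous section as consequences of $g^k|U_{13}=\sum_r c_{k,r}g^r$ and $\varphi g^k|U_{13}=\sum_r d_{k,r}g^r$. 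Substituting these into the definitions of $\gamma^\alpha$ and $\mu^{\alpha+1}$ and collecting the $(i,j)$ and $(j,i)$ contributions yields the antisymmetrization identities
\begin{align*}
\gamma^\alpha_{s,t} &= \sum_{i<j} \mu^\alpha_{i,j}\,\bigl(c_{i,s}c_{j,t} - c_{i,t}c_{j,s}\bigr),\\
\mu^{\alpha+1}_{s,t} &= \sum_{i<j} \gamma^\alpha_{i,j}\,\bigl(d_{i,s}d_{j,t} - d_{i,t}d_{j,s}\bigr),
\end{align*}
which reduce each inductive step to a term-by-term valuation estimate.

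For the base case $\alpha = 1$ I would verify the bound on $\mu^1_{s,t}$ directly from the explicit expansion $u_{1,j} = \sum_{r=1}^7 k_{1,r} h_{r,j}$ derived earlier, which gives
\[
\mu^1_{s,t} = \sum_{r=1}^7 k_{1,r}\bigl(h_{r,s}\,k_{1,t} - h_{r,t}\,k_{1,s}\bigr),
\]
combined with the tabulated values $\pi(k_{1,r}) \in \{0,1,2,3,4,5,5\}$ and the constraints $\pi(h_{r,j}) \ge 0$ for $j \le r$ and $\pi(h_{r,j}) \ge j-r$ for $j > r$. The inductive step then flows from $\mu^\alpha$ to $\gamma^\alpha$ using the first antisymmetrization identity together with $\pi(c_{k,r}) \ge \lfloor(13r-k-1)/14\rfloor$ from Lemma~\ref{AOB_val}, and from $\gamma^\alpha$ back to $\mu^{\alpha+1}$ using the second identity together with $\pi(d_{k,r}) \ge \lfloor(13r-k-8)/14\rfloor$.

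The main obstacle is the arithmetic of nested floor functions. For the $\mu^\alpha\to\gamma^\alpha$ step one needs
\[
\left\lfloor\tfrac{13(i+j)-46}{14}\right\rfloor + \left\lfloor\tfrac{13s-i-1}{14}\right\rfloor + \left\lfloor\tfrac{13t-j-1}{14}\right\rfloor \;\ge\; 1 + \left\lfloor\tfrac{13(s+t)-33}{14}\right\rfloor
\]
for all $1 \le i < j$ and $s,t \ge 1$, with an analogous inequality governing the reverse step. The minimum over $(i,j)$ is attained at $(i,j)=(1,2)$, where $i+j=3$ forces us to invoke the exceptional clause $\pi(\mu^\alpha_{1,2}) \ge 2\alpha-1$ from the induction hypothesis. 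I would handle this by splitting into $i+j=3$ versus $i+j \ge 4$: in the latter regime the uniform bound $\pi(\mu^\alpha_{i,j})\ge 2\alpha-1+\lfloor(13(i+j)-46)/14\rfloor$ provides comfortable slack via the crude inequality $\lfloor x/14\rfloor+\lfloor y/14\rfloor\ge\lfloor(x+y)/14\rfloor-1$, while the critical case $(i,j)=(1,2)$ reduces to a small residue-class analysis modulo $14$ showing that the target bound is tight along certain arithmetic progressions of $s+t$ but never violated.
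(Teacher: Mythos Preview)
Your proposal is correct and follows essentially the paper's proof: the same recurrence identities (your antisymmetrized sum is equivalent to the paper's full double sum $\gamma^\alpha_{st}=\sum_{i,j}c_{is}c_{jt}\,\mu^\alpha_{ij}$ since $\mu^\alpha_{ji}=-\mu^\alpha_{ij}$), the same $i+j=3$ versus $i+j\ge 4$ case split, and the same floor arithmetic. Two minor simplifications relative to your plan: the paper's base case simply cites $\pi(u_{1,j}),\,\pi(k_{1,j})\ge\lfloor(13j-9)/14\rfloor$ from Lemmas~\ref{AOB_val} and~\ref{AOB} rather than unpacking $u_{1,j}=\sum_r k_{1,r}h_{r,j}$, and the tight case in the $\mu^\alpha\to\gamma^\alpha$ step is actually $i+j=4$ (not $(i,j)=(1,2)$), so the single inequality $\lfloor x/14\rfloor+\lfloor y/14\rfloor\ge\lfloor(x+y-13)/14\rfloor$ suffices throughout with no residue-class analysis needed.
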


\begin{proof}
First, note that when $s+t=2$, 
\[
\mu^\alpha_{1,1} \;=\; 0 
\quad\text{and}\quad
\gamma^\alpha_{1,1} \;=\; 0.
\]
So, in what follows, we will assume that $s+t\ge3$.

\medskip
For $\alpha=1$, we have
\[
\mu^1_{s,t} \;=\; u_{1,s}\,k_{1,t} \;-\; u_{1,t}\,k_{1,s}.
\]
Recall the known valuations:
\[
\pi\bigl(u_{j}\bigr)
 \;\ge\;
 \left\lfloor \frac{13\,j - 9}{14}\right\rfloor,
\quad
\pi\bigl(k_{j}\bigr)
 \;\ge\;
 \left\lfloor \frac{13\,j - 9}{14}\right\rfloor.
\]
Hence
\begin{align*}
\pi(\mu^1_{st}) &\geq \min\{\pi(u_{1s}) + \pi(k_{1t}), \pi(u_{1t}) + \pi(k_{1s})\} = \\[2mm]
&= \min\left\{\left\lfloor\frac{13s-9}{14}\right\rfloor + \left\lfloor\frac{13t-9}{14}\right\rfloor, \left\lfloor\frac{13t-9}{14}\right\rfloor + \left\lfloor\frac{13s-9}{14}\right\rfloor\right\} \geq \\[2mm]
&\geq \left\lfloor\frac{13(s+t)-18-13}{14}\right\rfloor = 1 + \left\lfloor\frac{13(s+t)-45}{14}\right\rfloor \geq \\[2mm]
&\geq 1 + \left\lfloor\frac{13(s+t)-46}{14}\right\rfloor \quad \text{if } s+t > 3.
\end{align*}
If $(s + t) = 3$, then either $s=1,t=2$ or $s=2,t=1$, and we get the result by direct calculation.

Assume the statement holds for all $\mu^k_{s,t}$, $k\le\alpha$. Next, consider
\begin{align*}
\gamma_{st}^{\alpha} &= v_{\alpha s}\ell_{\alpha t} - v_{\alpha t}\ell_{\alpha s} = \left(\sum u_{\alpha i}c_{is}\right)\left(\sum k_{\alpha j}c_{jt}\right) - \left(\sum u_{\alpha i}c_{it}\right)\left(\sum k_{\alpha j}c_{js}\right) \\
&= \sum c_{is}c_{jt}(u_{\alpha i}k_{\alpha j} - u_{\alpha j}k_{\alpha i}) = \sum c_{is}c_{jt}\mu_{ij}^{\alpha}
\end{align*}
Then
\begin{align*}
\pi(\gamma_{st}^{\alpha}) &\geq \min_{i,j}\left\{\left\lfloor\frac{13s-i-1}{14}\right\rfloor + \left\lfloor\frac{13t-j-1}{14}\right\rfloor + 2\alpha-1 + \left\lfloor\frac{13(i+j)-46}{14}\right\rfloor\right\} \geq \\
&\geq \min_{i,j}\left\{\left\lfloor\frac{13(s+t)-(i+j)-15}{14}\right\rfloor + 2\alpha-1 + \left\lfloor\frac{13(i+j)-46}{14}\right\rfloor\right\}
\end{align*}
if $i+j>3$ and 
\begin{align*}
\pi(\gamma_{st}^{\alpha}) &\geq \left\lfloor\frac{13(s+t)-3-15}{14}\right\rfloor + 2\alpha-1 = \\
&= 2\alpha + \left\lfloor\frac{13(s+t)-32}{14}\right\rfloor \quad \text{if } i+j = 3
\end{align*}
The total minimum is obtained if $i+j=4$. Therefore,
\[
\pi(\gamma_{st}^{\alpha}) \geq 2\alpha + \left\lfloor\frac{13(s+t)-33}{14}\right\rfloor
\]

To complete our induction, we need to prove the bound for $\mu_{s,t}^{\alpha+1}$. Since
\begin{multline*}
\mu_{st}^{\alpha+1} = u_{\alpha+1,s}k_{\alpha+1,t} - u_{\alpha+1,t}k_{\alpha+1,s} = \left(\sum v_{\alpha i}d_{is}\right)\left(\sum \ell_{\alpha j}d_{jt}\right)\\ -  \left(\sum v_{\alpha i}d_{it}\right)\left(\sum \ell_{\alpha j}d_{js}\right) 
= \sum d_{is}d_{jt}\gamma_{ij}^{\alpha},
\end{multline*}
we have
\begin{align*}
\pi(\mu_{st}^{\alpha+1}) &\geq \min_{i,j}\left\{\left\lfloor\frac{13s-i-8}{14}\right\rfloor + \left\lfloor\frac{13t-j-8}{14}\right\rfloor + 2\alpha + \left\lfloor\frac{13(i+j)-33}{14}\right\rfloor\right\} \geq \\
&\geq \min\left\{\left\lfloor\frac{13(s+t)-(i+j)-29}{14}\right\rfloor + 2\alpha + \left\lfloor\frac{13(i+j)-33}{14}\right\rfloor\right\} = \\
&= \left\lfloor\frac{13(s+t)-3-29}{14}\right\rfloor + 2\alpha + \left\lfloor\frac{13\cdot3-33}{14}\right\rfloor = \\
&= 2\alpha + 1 + \left\lfloor\frac{13(s+t)-46}{14}\right\rfloor
\end{align*}
for $s+t>3$ since the minimum is reached when $i+j=3$. For $s+t=3$, $i+j>3$ we get
\begin{align*}
\pi(\mu_{st}^{\alpha+1}) &\geq \min_{i,j}\left\{\left\lfloor\frac{13s-i-8}{14}\right\rfloor + \left\lfloor\frac{13t-j-8}{14}\right\rfloor + 2\alpha + \left\lfloor\frac{13(i+j)-33}{14}\right\rfloor\right\} \geq \\
&\geq \min\left\{\left\lfloor\frac{13(s+t)-(i+j)-29}{14}\right\rfloor + 2\alpha + \left\lfloor\frac{13(i+j)-33}{14}\right\rfloor\right\} = \\
&= \min\left\{\left\lfloor\frac{10-(i+j)}{14}\right\rfloor + 2\alpha + \left\lfloor\frac{13(i+j)-33}{14}\right\rfloor\right\} = \\
&= 2\alpha + 1.
\end{align*} 
Finally, if $s+t=3$ and $i+j=3$, then
\[
\pi(\mu_{st}^{\alpha+1})\ge 2\alpha+1
\]
by direct calculation.
\end{proof}

\newpage

\end{document}